\newtheorem*{Problem}{Q}
\newtheorem{Main}{Theorem}
\newtheorem{Main2}[Main]{Theorem}
\newtheorem{Theorem 2}[Main]{Theorem}
\newtheorem*{Question1}{Q1}
\newtheorem*{Question2}{Q2}
\newtheorem*{Question3}{Q3}
\newtheorem*{Question4}{Q4}
\newtheorem*{Question5}{Q5}
\newtheorem{Lemma 1}{Lemma}
\newtheorem{Lemma 2}[Lemma 1]{Lemma}
\newtheorem{Lemma 3}[Lemma 1]{Lemma}
\newtheorem{LemmaMatomaki}{Lemma}
\newtheorem{LemmaMatomaki2}[LemmaMatomaki]{Lemma}
\newtheorem{NewLemma3}[LemmaMatomaki]{Lemma}
\newtheorem{NewLemma4}[LemmaMatomaki]{Lemma}
\newtheorem{Corollary}{Corollary}
\date{\today}
\begin{document}

	
	\title{ON POWER VALUES OF SUM OF DIVISORS FUNCTION IN ARITHMETIC PROGRESSIONS}
	\author[S.T.Somu]{Sai Teja Somu}
	\address{Aspireal Technologies Pvt. Limited\\ Hyderabad,
		 India}
	\email{somuteja@gmail.com}
	
	\author[V.Mishra]{Vidyanshu Mishra}
	\address{Department of Applied Mathematics\\ Delhi Technological University\\
		Delhi, India}
	\email{vidyanshurpvvyv@gmail.com}
	
	\begin{abstract}
		Let $a\geq 1, b\geq 0$ and $k\geq 2$ be any given integers. It has been proven that there exist infinitely many natural numbers $m$ such that sum of divisors of $m$ is a perfect $k$th power. We try to generalize this result when the values of $m$ belong to any given infinite arithmetic progression $an+b$. We prove if $a$ is relatively prime to $b$ and order of $b$ modulo $a$ is relatively prime to $k$ then there exist infinitely many natural numbers $n$ such that sum of divisors of $an+b$ is a perfect $k$th power. We also prove that, in general,  either sum of divisors of $an+b$ is not a perfect $k$th power for any natural number $n$ or sum of divisors of $an+b$ is a perfect $k$th power for infinitely many natural numbers $n$.   
	\end{abstract}
	\subjclass{Primary: 11A25, Secondary: 11B25}

	\keywords{Sum of divisors, Power Values, Arithmetic Progressions}
	
	\maketitle
	\section{Introduction}

	Sum of divisors function (denoted by $\sigma$) is an important function in number theory. Perfect numbers, natural numbers $n$ satisfying $\sigma(n)=2n$, have been studied as early as 300 BC.  Euclid showed in \cite{Euclid} that $\frac{q(q+1)}{2}$ is an even perfect number whenever $q$ is a prime number of the form $2^p-1$, for natural number $p$.  Euler (in \cite{Euler}) proved that all even perfect numbers are of this form. It is still not known whether there are infinitely many perfect numbers and whether there exists an odd perfect number.      
	
	 The problem of finding natural numbers $n$ such that $\sigma(n)$ is a perfect $k$th power has been considered by 17th century mathematicians like Fermat, Wallis, and Frenicle. Fermat in 1657, asked two problems: (i) Find a cube $m=n^3$ for which $\sigma(m)$  is a perfect square (ii) Find a square $m=n^2$ for which $\sigma(m)$ is a perfect cube 
	 (see Chapter 2, p.54 of \cite{Dickson} for detailed history). 
	 
	 In \cite{Beukers} (see also \cite{Freiberg}, \cite{Zmija}) it has been proven that for any natural number $k\geq 2$, $\sigma(m)$ is a perfect $k$th power for infinitely many natural numbers $m$. There are several sequences in Neil Sloane's online encyclopedia of integer sequences \cite{Sloane} dedicated to this problem, namely A006532 ($\sigma(m)$ is perfect square), A020477 ($\sigma(m)$ is perfect cube), A019422 ($\sigma(m)$ is perfect fourth power), A019423 ($\sigma(m)$ is perfect fifth power), A019424 ($\sigma(m)$ is perfect sixth power), A048257
	 ($\sigma(m)$ is perfect seventh power) and A048258 ($\sigma(m)$ is perfect eighth power). 
	 
	 In this paper, we investigate this problem when the values of $m$ belong to any given infinite arithmetic progression $\{an+b: n\in \mathbb{N}\}$ (here $\mathbb{N}=\{1,2,3,\ldots\}$). We study the following problem.
	
	\begin{Problem}
		For any given integers $a\geq 1,b\geq 0,k\geq 2$ do there exist infinitely many natural numbers $n$ such that $\sigma(an+b)$ is a perfect $k$th power?
	\end{Problem}
	
	In the next section, we give an answer to the above problem when $a,b$ are relatively prime and order of $b$ modulo $a$ and $k$ are relatively prime. We prove the following theorem.
	
	\begin{Main}
		For any given relatively prime natural numbers $a,b$ and $k\geq 2$, if order of $b$ modulo $a$ is relatively prime to $k$, then there exist infinitely many natural numbers $n$ such that $n\equiv b \mod a$ and $\sigma(n)$ is a perfect $k$th power. 
	\end{Main}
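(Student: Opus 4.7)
Since $\gcd(\mathrm{ord}_a(b),k)=1$, B\'ezout supplies a positive integer $u$ with $ku\equiv 1\pmod{\mathrm{ord}_a(b)}$; setting $c:=b^u\bmod a$ yields $c^k\equiv b\pmod a$ with $\gcd(c,a)=1$, so $b$ is automatically a $k$-th power residue modulo $a$ --- this is essentially the only place the hypothesis enters. My plan is to construct $N\equiv b\pmod a$ with $\sigma(N)$ a $k$-th power in the form
\[
N \;=\; p_1 p_2\cdots p_k\cdot M,
\]
where $p_1,\ldots,p_k$ are distinct primes with $p_i\equiv c\pmod a$ (produced by Dirichlet's theorem, using $\gcd(c,a)=1$) and $M=q_1\cdots q_s$ is a squarefree product of further distinct primes, each $\equiv 1\pmod a$ and coprime to $p_1\cdots p_k$. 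Then $N\equiv c^k\equiv b\pmod a$ automatically, and by multiplicativity $\sigma(N)=\prod_{i=1}^k(p_i+1)\cdot\prod_{j=1}^s(q_j+1)$.

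\smallskip

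\noindent With the $p_i$'s fixed (and chosen by additional Dirichlet conditions so that $\gcd(p_i+1,a)=1$), set $A:=\prod_{i=1}^k(p_i+1)$ and write its $k$-th-power-free part as $A_0=\ell_1^{e_1}\cdots\ell_r^{e_r}$ with each $1\le e_i\le k-1$ and $\gcd(A_0,a)=1$. The remaining task is to choose primes $q_j\equiv 1\pmod a$ so that $\prod_j(q_j+1)$ cancels $A_0$ modulo $k$-th powers. Following the strategy of Beukers \cite{Beukers}, I would select each $q_j$ by Dirichlet in a refined joint AP of the form $q\equiv 1\pmod a$ together with $q\equiv -1\pmod{\ell_i^{t}}$, $q\not\equiv -1\pmod{\ell_i^{t+1}}$ (the last condition via a positive-density argument), producing a prime $q_j$ with $v_{\ell_i}(q_j+1)$ equal to any prescribed value in $\{0,\dots,k-1\}$; combining enough such $q_j$'s iteratively kills the non-$k$-th-power contribution at each $\ell_i$, and any new primes introduced by the $(q_j+1)$'s are cancelled by further iterations. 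The joint congruences are solvable by CRT since $\gcd(a,\ell_i)=1$.

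\smallskip

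\noindent Once a single $N_0\equiv b\pmod a$ with $\sigma(N_0)$ a $k$-th power is produced, infinitely many follow by a multiplicativity trick: the same construction applied with $b$ replaced by $1$ (the order condition $\gcd(\mathrm{ord}_a(1),k)=1$ is automatic) yields infinitely many pairwise-coprime $M'\equiv 1\pmod a$ with $\sigma(M')$ a $k$-th power. For each such $M'$ coprime to $N_0$, one has $N_0 M'\equiv b\pmod a$ and $\sigma(N_0 M')=\sigma(N_0)\sigma(M')$ is again a $k$-th power.

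\smallskip

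\noindent The hard part will be the iterative $k$-th-power-matching in the second paragraph: ensuring that Dirichlet's theorem, constrained by the additional AP $q\equiv 1\pmod a$, still delivers primes $q$ whose $(q+1)$ admits arbitrarily prescribed $\ell$-adic profiles, and that the cancellation procedure terminates despite each new $q_j$ potentially enlarging the set of bad primes. Termination is the delicate point of Beukers' method; the extra AP constraint modulo $a$ is orthogonal to it and merely introduces one more coprime factor in each CRT step.
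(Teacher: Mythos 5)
Your overall architecture (find one $N_0\equiv b\pmod a$ with $\sigma(N_0)$ a $k$th power, then multiply by infinitely many pairwise coprime $M'\equiv 1\pmod a$ with $\sigma(M')$ a $k$th power) is sound, and your use of the hypothesis --- $\gcd(\mathrm{ord}_a(b),k)=1$ makes $b\equiv c^k\pmod a$ a $k$th power residue --- is a legitimate alternative to the paper's use of it. (The paper instead keeps all primes $p\equiv b\pmod a$ and arranges for the \emph{number} of them to be $\equiv 1$ modulo $\mathrm{ord}_a(b)$, which requires solving $n\equiv 0\pmod k$, $n\equiv 1\pmod t$ and hence the same coprimality hypothesis.) But the second paragraph, which you yourself flag as ``the hard part,'' is where the entire content of the theorem lives, and what you describe there is not a proof. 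The iterative cancellation does not terminate: Dirichlet's theorem lets you prescribe $q\equiv -1\pmod{\ell^t}$ and $q\equiv 1\pmod a$, but it gives you no control whatsoever over the cofactor $(q+1)/\ell^t$, which will generically contain large new primes to odd exponents; cancelling those requires new $q$'s, which introduce further new primes, and there is no known argument that this process closes up. This is precisely why every existing proof (including \cite{Beukers}, \cite{Freiberg}, \cite{Matomaki}, and the present paper) first restricts to primes $p$ with $p+1$ \emph{smooth} --- all prime factors below a fixed bound $y$ --- so that the exponent vectors live in a fixed finite group $(\mathbb{Z}/k\mathbb{Z})^{\pi(y)}$, and then applies a pigeonhole/zero-sum theorem to extract a nonempty subset whose product of $(p+1)$'s is a $k$th power. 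Attributing an iterative-cancellation strategy to Beukers is a mischaracterization; their method is the smooth-shifted-primes plus zero-sum route, and the analytic input (here, a Bombieri--Vinogradov argument showing $\gg x/(\phi(a)\log x)$ primes $p\equiv b\pmod a$ in $[x/2,x]$ with $P(p+1)\le x^{3/5}$) is what makes the combinatorial step applicable.

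There is also a smaller but genuine error in your setup: you cannot choose $p_i\equiv c\pmod a$ \emph{and} $\gcd(p_i+1,a)=1$ by ``additional Dirichlet conditions,'' since $p_i\equiv c\pmod a$ already forces $p_i+1\equiv c+1\pmod a$, so $\gcd(p_i+1,a)=\gcd(c+1,a)$ is determined and may exceed $1$ (e.g.\ $a=3$, $b=c=2$). Worse, any prime $\ell\mid a$ appearing in $A_0$ cannot be cancelled by primes $q\equiv 1\pmod a$, because such $q$ satisfy $q\equiv 1\pmod\ell$ and hence $v_\ell(q+1)=0$ for odd $\ell$; your CRT step silently assumes $\gcd(a,\ell_i)=1$ for all $i$, which need not hold. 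These defects are patchable (control $v_\ell(p_i+1)$ for $\ell\mid a$ directly via finer congruence conditions on the $p_i$), but as written the construction breaks already at the first stage. To repair the proof you should replace the iterative cancellation by the smoothness-plus-zero-sum argument: work with the set $A=\{p+1: p\equiv b\pmod a,\ \tfrac{x}{2}\le p\le x,\ P(p+1)\le x^{3/5}\}$ and apply a Davenport-constant or Matom\"aki-type subgroup lemma in $(\mathbb{Z}/k\mathbb{Z})^{|S|}\times(\mathbb{Z}/t\mathbb{Z})$ to find a subset whose product is a $k$th power with cardinality $\equiv 1\pmod t$.
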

	
	For general $b$, we prove the following theorem.
	\begin{Main2}
		For any given integers $a\geq 1,b\geq 0$ and $k\geq 2$, either $\sigma(an+b)$ is not a perfect $k$th power for any natural number $n$ or $\sigma(an+b)$ is a perfect $k$th power for infinitely many natural numbers $n$.
	\end{Main2}
	
	In the final section, we give few open questions for further research.  
	
	\section{Proofs of Theorem 1 and Theorem 2}
	We require some lemmas in order to prove Theorem 1 and Theorem 2. We will use a group theoretic lemma and a number theoretic lemma mentioned in \cite{Matomaki}.
	
	Let $\mathbb{P}$ be set of primes. For relatively prime positive integers $a,b$ define, $$\pi_{a,b}(x,y) = \left|\left\{\frac{x}{2}\leq p \leq x: p\in \mathbb{P},p\equiv b \mod a \text{ and }P(p+1)\leq y\right\}\right|,$$ where $P(n)$ denotes largest prime factor of $n$. 
	
	The next lemma is essentially \cite[Lemma 2]{Matomaki}, with a slight difference. Since there is a minor difference we give a proof of the lemma here.
	
	\begin{LemmaMatomaki}
		Let $a,b$ be fixed coprime integers with $b>0$. For any $\alpha>\frac{1}{2}$, there exist $\gamma(\alpha)>0$ and $x_1(\alpha,a)$ such that $$\pi_{a,b}(x,x^{\alpha})\geq \frac{\gamma(\alpha)x}{\phi(a)\log x}$$ for all $x\geq x_1(\alpha,a)$.
	\end{LemmaMatomaki}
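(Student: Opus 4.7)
The plan is to follow the argument of Matom\"aki's Lemma 2, adapting it to accommodate the two features of the present statement: primes $p$ are counted in the short interval $[x/2, x]$ rather than up to $x$, and the progression condition $p \equiv b \pmod a$ is imposed. The core strategy is to parametrize the would-be primes $p$ via the factorization $p+1 = qm$ with $q$ a ``medium-sized'' prime, so that the smoothness condition $P(p+1) \leq x^\alpha$ is forced for free.

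Fix $\alpha > 1/2$ and set $y = x^\alpha$. Any $p$ with $p+1 = qm$ for a prime $q \leq y$ and integer $m$ with $P(m) \leq y$ automatically satisfies $P(p+1)\leq y$. So the problem reduces to producing many pairs $(q,m)$ with $q$ prime in a suitable dyadic subrange of $[1,y]$, $m$ a $y$-smooth positive integer, $qm \in (x/2+1,\, x+1]$, $qm \equiv b+1 \pmod a$, and $qm-1$ prime. I would choose the dyadic range for $q$ inside $[y^{1/2}, y]$ so that the cofactor $m$ is of size at most $x^{1-\alpha/2}$, and in particular the condition $P(m)\leq y$ can be handled (either trivially when $\alpha \geq 2/3$, or by inserting a mild smoothness condition on $m$ and using standard lower bounds for smooth numbers in arithmetic progressions mod~$a$).

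Next, interchanging the order of summation, I would fix $m$ in its dyadic range and in the residue class $(b+1)\bar{m}\pmod a$, and count primes $q$ in the corresponding subinterval and progression such that $qm-1$ is also prime. The resulting counting problem is a shifted prime counting problem in an arithmetic progression modulo~$a$; since $a$ is fixed independently of $x$, Siegel--Walfisz (or equivalently the prime number theorem for arithmetic progressions with fixed modulus) suffices to produce an asymptotic of the expected order $x/(\phi(a)\log^2 x)$ per $m$. Summing over the dyadic $m$-range reintroduces a factor $\log x$ (from the density of admissible $m$) and yields the claimed lower bound $\gamma(\alpha) x/(\phi(a)\log x)$, with $\gamma(\alpha)$ built from an integral of the shape $\int_{1/2}^{\alpha} dt/t$ arising from the prime number theorem applied to $q$.

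The main technical obstacle I expect is the short-interval restriction $qm \in (x/2, x]$, which narrows Matom\"aki's original range and forces one to describe the valid $(q,m)$ region more carefully; in particular one must verify that the region has positive measure in the logarithmic variables for every $\alpha>1/2$, yielding a strictly positive $\gamma(\alpha)$. A secondary point is to confirm that the implicit error terms from Siegel--Walfisz applied to shifted primes are uniform enough across the dyadic ranges to be absorbed into the main term; since $a$ is fixed and all moduli involved are $O(1)$, this is routine but must be checked. Once these two points are in place, the remainder of the proof is a transcription of Matom\"aki's argument.
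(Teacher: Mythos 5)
Your parametrization $p+1=qm$ is a reasonable starting point, but the way you propose to count is fatally flawed: after fixing the smooth cofactor $m$, you ask for primes $q$ in a progression modulo $a$ \emph{such that $qm-1$ is also prime}. That is a simultaneous two-prime condition of twin-prime/Sophie Germain type, and Siegel--Walfisz (or any form of the prime number theorem for arithmetic progressions) says nothing about it: it counts primes $q$ in a progression but gives no control whatsoever on the primality of $qm-1$. Sieve methods give only an upper bound of the correct order, or lower bounds with $qm-1$ almost prime; no known technique produces the asymptotic of order $x/(\phi(a)\log^2 x)$ ``per $m$'' that your argument requires. This is not an error term to be absorbed; it is the heart of the matter, and the proof cannot be completed along the route you describe. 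A secondary problem is your range $q\in[y^{1/2},y]=[x^{\alpha/2},x^{\alpha}]$: the cofactor $m$ then reaches size $x^{1-\alpha/2}$, which exceeds $x^{\alpha}$ for $\alpha<2/3$, so the smoothness of $m$ is a genuine extra constraint entangled with the primality conditions.

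The paper avoids the double primality condition by fixing the \emph{prime} factor $q$ and letting the cofactor be an arbitrary integer: it takes $q$ prime in $[x^{1-\alpha},x^{1/2-\epsilon}]$ with $0<\epsilon<\alpha-\tfrac{1}{2}$ and counts primes $\tfrac{x}{2}\le p\le x-1$ with $p\equiv-1\pmod q$ and $p\equiv b\pmod a$. Then only $p$ has to be prime; the cofactor $k=(p+1)/q\le x/x^{1-\alpha}=x^{\alpha}$ is automatically $x^{\alpha}$-smooth simply because of its size, so no smoothness condition (and no primality condition) on $k$ is ever imposed. The two congruences combine into one modulo $aq\le ax^{1/2-\epsilon}$, which is exactly within reach of the Bombieri--Vinogradov theorem --- note that Siegel--Walfisz could not handle a modulus growing like a power of $x$, so even a repaired version of your count would need this input. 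Finally, a given $p$ arises from at most two such $q$ (three would force $q_1q_2q_3\mid p+1$ with $q_1q_2q_3\ge x^{3(1-\alpha)}>x$ once $\alpha<\tfrac{2}{3}$, to which case one may reduce), so the overcount costs only a factor $2$. If you reorganize your argument along these lines --- prime $q$ in $[x^{1-\alpha},x^{1/2-\epsilon}]$, free integer cofactor, Bombieri--Vinogradov --- then the short-interval and mod-$a$ modifications you flag are indeed routine.
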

	\begin{proof}
	 Since proving for $\frac{1}{2}<\alpha<\frac{2}{3}$ suffices, we can assume $\alpha<\frac{2}{3}$. Choose a positive $\epsilon =\epsilon(\alpha)<\alpha-\frac{1}{2}$. Now if $p\leq x-1$ is such that $p=-1+qk$ for a prime $q\in [x^{1-\alpha},x^{\frac{1}{2}-\epsilon}]$, then as $p+1=qk$, $q\leq x^{\frac{1}{2}-\epsilon}<x^{\alpha}$ and $k=\frac{p+1}{q}\leq \frac{x}{x^{1-\alpha}}\leq x^{\alpha}$ we have $P(p+1)\leq x^{\alpha}$. 
	 Each $\frac{x}{2}\leq p\leq x-1$ can have at most two such representations (as if there are three different representations $p=-1+q_1k_1=-1+q_2k_2=-1+q_3k_3$ then $q_1q_2q_3|p+1$ which cannot be true as $p+1\leq x<x^{3(1-\alpha)} \leq q_1q_2q_3$). Hence \[\pi_{a,b}(x,x^{\alpha})\geq  \frac{1}{2}\sum_{\substack{{x^{1-\alpha}<q<x^{\frac{1}{2}-\epsilon}}\\ {q\in \mathbb{P}}}}\sum_{\substack{{\frac{x}{2}\leq p\leq x-1}\\{p\equiv -1 \mod q}\\{p\equiv b \mod a}}}1  .\]
	 When $x$ is large enough, then the congruence conditions can be combined into a single congruence $\mod aq$, so, by the Bombieri-Vinogradov theorem, 
	 \begin{align*}
	 \pi_{a,b}(x,x^{\alpha})&\geq \frac{1+o(1)}{2\phi(a)}\sum_{\substack{{x^{1-\alpha}<q<x^{\frac{1}{2}-\epsilon}}\\{q\in \mathbb{P}}}}\left(\frac{x}{2\phi(q)\log x}+O(x(\log x)^{-10})\right)
	 \\&\geq \frac{\log (\frac{1}{2}-\epsilon)x}{8(1-\alpha)\phi(a)\log x}
	 \end{align*}
	 for every large enough $x$.
	\end{proof}
	Let $\lambda(G)$ denote maximal order of the elements of a group $G$, and let $\Omega(h)$ denote the number of prime number divisors of the natural number $h$, counted with multiplicities.   
	We have the following lemma from \cite[Lemma 6]{Matomaki}.
	\begin{LemmaMatomaki2}
		For any multiplicative group $G$ write $$s(G)=\lceil5\lambda(G)^2\Omega(|G|)\log {(3\lambda(G)\Omega(|G|))} \rceil .$$Let $A$ be a sequence of length $n$ consisting of nonidentity elements of $G$. Then there exists a nontrivial subgroup $H\leq G$ such that if $n\geq s(G)$, then, for every $h\in H$, $A\cap H$ has a subsequence whose product is equal to $h$.
	\end{LemmaMatomaki2}
	\begin{proof}
		See Lemma 6 of \cite{Matomaki}.
	\end{proof}
	
	\begin{NewLemma3}
		Let $k\geq 2,t$ be relatively prime positive integers and $S$ be any nonempty finite set of primes. Let $A$ be any finite set of natural numbers such that all the prime factors of $A$ belong to $S$. If $|A|\geq \lceil 5k^2t^2(|S|\Omega(k)+\Omega(t))\log (3kt(|S|\Omega(k)+\Omega(t))) \rceil$ then there exists a nonempty subset $A'$ of $A$ such that $\prod_{a\in A'}a$ is a perfect $k$th power and $|A'|\equiv 1\mod t$.
	\end{NewLemma3}
	\begin{proof}
		Let $n_1,\ldots, n_{|A|}$ be distinct elements of $A$ and let $p_1,\ldots,p_{|S|}$ be distinct elements of $S$. Let $G=(\mathbb{Z}/k\mathbb{Z})^{|S|}\times (\mathbb{Z}/t\mathbb{Z})$. As all the prime factors of $n_i$ are in $S$ there exist nonnegative integers $a_{i,1},\ldots, a_{i,|S|}$ such that $n_i=p_1^{a_{i,1}}\cdots p_{|S|}^{a_{i,|S|}}$. Now define $e_i\in G$ in the following way: $$e_i=(a_{i,1}\mod k,\cdots, a_{i,|S|}\mod k, 1 \mod t),$$ for $1\leq i \leq |A|$. Now for group $G$, we have maximal order $\lambda(G)=kt$, and the number of elements equal to $|G|=k^{|S|}t$, $\Omega(|G|)=|S|\Omega(k)+\Omega(t)$. Now, $s(G)$ given in Lemma 2 is equal to, $s(G)=\lceil 5k^2t^2 (|S|\Omega(k)+\Omega(t))\log(3kt(|S|\Omega(k)+\Omega(t)))\rceil$.
		If any $e_i$ is identity element then  $n_i$ is a perfect $k$th power and $t=1$, which implies there is a subset $A'=\{n_i\}$ of $A$ such that $\prod_{a\in A'}a = n_i$ is perfect $k$th power and $|A'|\equiv 1\mod t$. Hence we can assume $e_i$ is not equal to identity for all $i\in \{1,\ldots,|S|\}$. 
		 
		 Since length of the sequence, $|A|\geq s(G)$, from Lemma 2, there exists a nontrivial subgroup $H\leq G$ such that for every $h\in H$, $\{e_1,\cdots,e_{|A|}\}\cap H$ has a subsequence whose product is equal to $h$.
		
		Now let $H$ be such a subgroup, clearly $\{e_1,\cdots,e_{|A|}\}\cap H$ is nonempty. Let $e\in \{e_1,\cdots,e_{|A|}\}\cap H$. As $k,t$ are relatively prime, there exists a positive integer solution to the congruences $n\equiv 0 \mod k$, and $n\equiv 1 \mod t$. Let $n$ be one solution. Now as $$n.e=(0\mod k,\ldots, 0 \mod k, 1 \mod t)\in H$$ , from Lemma 2, there exists a subsequence $e_{m_1},\ldots, e_{m_r}$ such that $$\sum_{i=1}^{r}e_{m_i}=(0 \mod k,\ldots, 0 \mod k, 1 \mod t).$$
		Let $A'=\{n_{m_1},\ldots, n_{m_r}\}\subset A$. From the definition of $e_i$ and $\sum_{i=1}^{r}e_{m_i}=(0 \mod k,\ldots, 0 \mod k, 1 \mod t)$ it follows that $r=|A'|\equiv 1 \mod t$ and all the exponents in the prime factorization of $\prod_{a\in A'}a$ are multiples of $k$, hence $\prod_{a\in A'}a$ is a perfect $k$th power.
	\end{proof}
\begin{NewLemma4}
Let $a,b,k$ be positive integers such that $a,b$ are relatively prime and $k\geq 2$ is relatively prime with order of $b$ modulo $a$. There exists a positive real number $x_0(a)$ such that for all $x\geq x_0(a)$ there exists a natural number $n$ such that all the prime factors of $n$ lie in the interval $[\frac{x}{2},x]$, $n\equiv b \mod a$, and $\sigma(n)$ is perfect $k$th power.		
\end{NewLemma4}
\begin{proof}
Let $$A=\left\{p+1: \text{$p$ prime}, \frac{x}{2}\leq p\leq x, p\equiv b \mod a, \text{ and } P(p+1)\leq x^{\frac{3}{5}} \right\},$$ and $S=\{p: p\text{ prime }, p\leq x^{\frac{3}{5}}\}$. Let order of $b$ modulo $a$ be equal to $t$. From prime number theorem, we have $|S|=O\left(\frac{x^{\frac{3}{5}}}{\log x}\right)$, and therefore $\lceil 5k^2t^2(|S|\Omega(k)+\Omega(t))\log (3kt(|S|\Omega(k)+\Omega(t))) \rceil = O(x^{\frac{3}{5}})$. From Lemma 1, for sufficiently large $x$, we have $|A|\geq \frac{\gamma(\frac{3}{5})x}{\phi(a)\log x}$. Hence for sufficiently large $x$, $|A|\geq \lceil 5k^2t^2(|S|\Omega(k)+\Omega(t))\log (3kt(|S|\Omega(k)+\Omega(t))) \rceil$. Applying Lemma 3 we get that there exists a subset $A'$ of $A$ such that $|A'|\equiv 1 \mod t$ and $\prod_{p+1\in A'}(p+1)$ is a perfect $k$th power. Let $n=\prod_{p+1\in A'}p$, we have $\sigma(n)=\prod_{p+1\in A'}(p+1)$ as a perfect $k$th power. Since from the definition of $A$ we have $p+1\in A'$ implies $p\equiv b \mod a$ we can conclude that $n=\prod_{p+1\in A'}p\equiv b^{|A'|}\equiv b \mod a$, which completes the proof of the lemma.   	
\end{proof}

	We can now prove Theorem 1 and Theorem 2.
	\subsection{Proof of Theorem 1}
	\begin{proof}
		Assume for a contradiction that there are only finitely many $n$ such that $n\equiv b \mod a$ and  $\sigma(n)$ is a perfect $k$th power. Then there exists an $x_1>0$ such that for all $n\equiv b \mod a$ and $n\geq x_1$, $\sigma(n)$ is not a perfect $k$th power. This implies that there cannot be any $n$ whose prime factors lie in the range $[\frac{x}{2},x]$ for $x=\max\{2x_0(a),2x_1\}$, is congruent to $b$ modulo $a$, and whose sum of divisors is a perfect $k$th power, contradicting Lemma 4.   
	\end{proof}
    We have a corollary for Theorem 1.
	\begin{Corollary}
		For any given natural numbers $a$ and $k\geq 2$ there exist infinitely many natural numbers $n$ such that $n\equiv 1 \mod a$ and $\sigma(n)$ is perfect $k$th power.
	\end{Corollary}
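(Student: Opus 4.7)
The plan is to derive the corollary as an immediate special case of Theorem~1 by setting $b=1$. First I would verify the hypotheses of Theorem~1 for this choice: (i) $\gcd(a,1)=1$, so $a$ and $b=1$ are relatively prime natural numbers; (ii) the multiplicative order of $1$ modulo $a$ equals $1$, since $1^1 \equiv 1 \pmod{a}$; and (iii) $\gcd(1,k)=1$ for every $k\geq 2$, so the order of $b$ modulo $a$ is relatively prime to $k$. With all three conditions satisfied, Theorem~1 yields infinitely many natural numbers $n$ with $n\equiv 1 \pmod{a}$ and $\sigma(n)$ a perfect $k$th power, which is precisely the statement of the corollary.

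Because the substantive analytic and combinatorial content has already been invested in the proofs of Lemmas~1--4 and Theorem~1, there is no remaining obstacle; the corollary reduces to a short bookkeeping verification that the residue class $b=1$ trivially meets the order hypothesis. This special case also highlights that the constraint appearing in Theorem~1, namely $\gcd(\mathrm{ord}_{a}(b),k)=1$, vanishes identically for the progression $n\equiv 1 \pmod{a}$, so the conclusion holds uniformly in $a$ and $k$ with no side conditions on the modulus or the exponent.
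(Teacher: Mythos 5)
Your proof is correct and is essentially identical to the paper's: both deduce the corollary from Theorem~1 by taking $b=1$ and observing that the order of $1$ modulo $a$ is $1$, which is relatively prime to every $k\geq 2$.
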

	\begin{proof}
		As order of $b=1$ modulo $a$ equal to $1$, is relatively prime to $k$, the corollary follows from  Theorem 1. 
	\end{proof}
	\subsection{Proof of Theorem 2}
	\begin{proof}
		Suppose for a contradiction that there are nonzero finitely many natural numbers of the form $an+b$ such that $\sigma(an+b)$ is a perfect $k$th power, then let $an_m+b$ be largest such natural number. From Lemma 4, for $b=1$, there exists a natural number $r\equiv 1 \mod a$ such that all the prime factors of $r$ lie in $[\frac{x}{2},x]$ for $x=\max\{x_0(a),4(an_m+b)\}$ and $\sigma(r)$ is perfect $k$th power. Since all prime factors of $m$ are greater than $an_m+b$, we have $an_m+b$ is relatively prime to $r$. We have $(an_m+b)r>an_m+b$,  $\sigma((an_m+b)r)=\sigma(an_m+b)\sigma(r)$ is a perfect $k$th power and $(an_m+b)r\equiv b \mod a$, contradicting maximality of $an_m+b$.   
	\end{proof}
	
	\section{Future prospects}
	We ask five questions Q1 to Q5 in this section.
	
	Given $a,b$ and $k\geq 2$, Theorem 2 leaves us with two possibilities, either $\sigma(an+b)$ is perfect $k$th power for infinitely many $n$ or $\sigma(an+b)$ is never a perfect $k$th power. This implies that if there exists a natural number $n$ such that $\sigma(an+b)$ is perfect $k$th power then there are infinitely many $n$ such that $\sigma(an+b)$ is a perfect $k$th power. So we ask the following question.
	
	\begin{Question1}
		Do there exist integers $a\geq 1,b\geq 0,k\geq 2$ such that $\sigma(an+b)$ is not a perfect $k$th number for any natural number $n$? 
	\end{Question1}
	If the answer to Q1 is no, then from Theorem 2, it follows that for all integers $a\geq 1,b\geq 0,k\geq 2$ there exist infinitely many $n$ such that $\sigma(an+b)$ is a perfect $k$th power.
	
	For natural numbers $a$, $k\geq 2$ and integer $b$ such that  $0\leq b\leq a-1$, let $N(x;a,b,k)$ denote number of natural numbers $n\leq x$ such that $n\equiv a \mod b$ and $\sigma(n)$ is perfect $k$th power. 
	
	\begin{Question2}
		Given natural numbers $a,0\leq b_1 ,b_2\leq a-1,k\geq 2$ such that $N(x;a,b_1,k)>0$, $N(x;a,b_2,k)>0$ for some real number $x$. Is it true that 
		$$\liminf_{x\rightarrow \infty} \frac{N(x;a,b_1,k)}{N(x;a,b_2,k)}>0?$$
	\end{Question2}

	We will now ask a stronger version of the above question. Are perfect $k$th power values of $\sigma(n)$ equally distributed among residue classes modulo $a$?
	\begin{Question3}
		Given natural numbers $a,0\leq b_1 ,b_2\leq a-1,k\geq 2$, is it true that there exists a real number $x$ such that $N(x;a,b_1,k)>0$, $N(x;a,b_2,k)>0$ and  
		$$\lim_{x\rightarrow \infty} \frac{N(x;a,b_1,k)}{N(x;a,b_2,k)}=1?$$
	\end{Question3}
	
	Numerical evidence suggests that Q3 may not always be true, for example when we consider perfect square values of $\sigma(n)$ for $n\leq x$ and residue classes $n\equiv 0 \bmod 7$ and $n\equiv 1 \bmod 7$, we have the following data.  
	\begin{table}[h!]
		\centering
		\begin{tabularx}{0.8\textwidth} { 
				| >{\raggedright\arraybackslash}X 
				| >{\centering\arraybackslash}X 
				| >{\raggedleft\arraybackslash}X |  >{\raggedleft\arraybackslash}X    | }
			\hline
			$x$ & $N(x;7,0,2)$ & $N(x;7,1,2)$ & $\frac{N(x;7,0,2)}{N(x;7,1,2)}$\\
			\hline
			100 & 2 & 2 & 1.0000...\\
			\hline
			1000 & 16 & 5 & 3.2000...\\
			\hline
			10000 & 68 & 28 &  2.4285...\\
			\hline
			100000 & 307 & 147 & 2.0884... \\
			\hline
			1000000 & 1508 & 748 & 2.1299...\\
			\hline
			10000000 & 7562 & 3811  & 1.9842... \\
			\hline
			100000000 & 37652 & 18882 & 1.9940... \\
			\hline
		\end{tabularx}
		\caption{Comparison of $N(x;7,0,2)$ and $N(x;7,1,2)$}
	\end{table}
	
	Table 1 suggests that, there might be larger number of natural numbers $n$ of the form $7k$ compared to $7k+1$ for which $\sigma(n)$ is a perfect square. We ask the following question.
	
	\begin{Question4}
		Is it true that $$\limsup_{n\rightarrow \infty} \frac{N(x;7,0,2)}{N(x;7,1,2)}>1?$$
	\end{Question4}
	
	If the answer for question Q4 is yes, then the answer for Q3 is no.

	 Till now, we have been dealing with linear polynomials $an+b$. A natural generalization would be the following question.
	
	\begin{Question5}
		Let $p(n)$ be any given polynomial with nonnegative integer coefficients and $k\geq 2$ be any natural number. Is $\sigma(p(n))$ perfect $k$th power for infinitely many $n$?
	\end{Question5}
	
	Question 2.5 (when $p(n)=n^l$), Conjecture 2.6 (when $p(n)=n^2$), Conjecture 2.7 (when $p(n)=n^3$) of \cite{Beukers} are special cases of the above question.
	In Neil Sloane’s online encyclopedia of integer sequences \cite{Sloane} we find few sequences related to this problem, namely, A008847 ($\sigma(n^2)$ is a perfect square, $p(n)=n^2$, $k=2$), A008850
	($\sigma(n^2)$ is a perfect cube, $p(n)=n^2$, $k=3$) and A008849 ($\sigma(n^3)$ is a perfect square, $p(n)=n^3$, $k=2$).

	\normalsize


\begin{thebibliography}{[B12]}
		
		\normalsize
		\baselineskip=17pt
		
		\bibitem[B12]{Beukers} F. Beukers , F. Luca , F. Oort, {\it Power values of divisor sums}, The American Mathematical Monthly 119, {\bf 5} (2012), 373\textendash380.
		
		\bibitem[D66]{Dickson} L. E. Dickson, {\it History of the Theory of Numbers}, Vol. I, Divisibility and Primality, Chelsea Publishing,
		New York, 1966.
		
		\bibitem[E]{Euclid} Euclid, {\it  Elements}, Book IX, Proposition 36.
		
		\bibitem[LE]{Euler} L. Euler, {\it De numeris amicibilibus}, Commentationes arithmeticae (in Latin, published posthumously), {\bf 2} (1849), 627\textendash 636.
		
		\bibitem[F12]{Freiberg} T. Freiberg, {\it Products of shifted primes simultaneously taking perfect power values}, J. Aust. Math. Soc.
		(special issue dedicated to Alf van der Poorten) {\bf 92} (2012), 145\textendash 154.
		
		\bibitem[M13]{Matomaki} K. Matomäki, {\it On Carmichael numbers in arithmetic progressions}, J. Aust. Math. Soc. {\bf 2} (2013), 1–8.
		
		\bibitem[S]{Sloane} N. J. Sloane, {\it The Online Encyclopedia of integer sequences}, available at \href{http://oeis.org}{http://oeis.org}.
		
		 \bibitem[W13]{Wright} T. Wright, {\it Infinitely many Carmichael numbers in arithmetic progressions}, Bull. Lond.
		Math. Soc., {\bf 45} (2013), 943–952.
		
		\bibitem[Z18]{Zmija} B. Żmija, {\it Power values of arithmetic functions}, Colloquium Mathematicum {\bf 151} (2018), 19\textendash 25.
		
		
		
	\end{thebibliography}
\end{document}